\documentclass[11pt, a4paper]{article}

\usepackage{amsmath, amsthm, amsfonts, amssymb}

\usepackage{setspace}
\usepackage{fullpage}
\usepackage{enumitem}
\usepackage{graphicx}

\usepackage{floatpag}
\usepackage[dvipsnames]{xcolor}

\usepackage[initials]{amsrefs} 

\usepackage{float}
\usepackage{caption}
\usepackage{subcaption}

\usepackage{hyperref}

\hypersetup{colorlinks=true,
	citecolor=blue,
	filecolor=blue,
	linkcolor=blue,
	urlcolor=blue
}
\usepackage{cleveref}

\usepackage{comment}
\usepackage{tikz}
\bibliographystyle{plain}

\newtheorem{theorem}{Theorem} 
\newtheorem{lemma}[theorem]{Lemma}

\newcommand{\Up}{{\rm Up}}
\newcommand{\cH}{\mathcal{H}}

\newcommand{\ex}{{\rm  ex}}

\setlength\parindent{0pt}

\begin{document}
	\title{A class of graphs of  zero Tur\'an density in a hypercube
	\date{\vspace{-5ex}}
	\author{
		Maria Axenovich
	\thanks{Karlsruhe Institute of Technology, Karlsruhe, Germany;
			email:
			\mbox{\texttt{maria.aksenovich@kit.edu}}
}
}}
			
	\maketitle

	\begin{abstract}
	A graph is cubical if it is a subgraph of a hypercube. For a cubical graph $H$ and a hypercube $Q_n$, $\ex(Q_n, H)$ is the largest number of edges in an $H$-free subgraph of $Q_n$.
If $\ex(Q_n, H)$ is at least  a positive proportion of the number of edges in $Q_n$, $H$ is said to have a positive Tur\'an density in a hypercube or simply a positive Tur\'an density; otherwise it has a zero Tur\'an density.   Determining $\ex(Q_n, H)$ and even identifying whether $H$ has a positive or a zero Tur\'an density remains a widely open question for general $H$. 
By relating  extremal numbers in a hypercube  and certain corresponding hypergraphs, Conlon found a large class of cubical graphs, ones having  so-called partite representation, that have a zero Tur\'an density. He raised a question whether this gives a characterisation, i.e., whether a cubical graph has zero Tur\'an density if and only if it has partite representation. Here, we show that, as suspected by Conlon, this is not the case. 
We give an example of a class of cubical graphs which have no partite representation, but on the other hand, have a zero Tur\'an density.
In addition, we show that any graph whose every block has partite representation has  a zero Tur\'an density in a hypercube.	
	\end{abstract}

	\section{Introduction}
	
	A {\it hypercube} $Q_n$ with a ground set $X$ of size $n$, is a graph on a vertex set $\{A: A\subseteq X\}$ and an edge set consisting of all pairs $\{A,B\}$, where $A\subseteq B$ and $|A|=|B|-1$.  Unless specified, $X=[n]$, where $[n]= \{1, \ldots, n\}$. We often identify vertices of $Q_n$ with binary vectors that are indicator vectors of respective sets.  If a graph is a subgraph of $Q_n$, for some $n$, it is called {\it cubical}. We denote the number of vertices and the number of edges in a graph $H$ by  $|H|$ and $||H||$, respectively. The degree of a vertex $y$ in a graph $H$ is denoted $d(y)$ or $d_H(y)$. A {\it block} in a graph is a maximal two-connected subgraph or a bridge.  We shall need the notion of layers.  The $i$th {\it vertex layer} of $Q_n$,  denoted $V_i$, is the set of vertices  $\binom{[n]}{i}$,  $i=0, \ldots, n$.  The $i$th {\it edge-layer}  $L_i$ of $Q_n$ is a graph  induced by $V_i\cup V_{i-1}$, $i=1, \ldots, n$.  \\
	
For a graph $H$, let the {\it extremal number}  of $H$ in $Q_n$, denoted 	$\ex(Q_n, H)$,  be the largest number of edges in a subgraph $G$ of  $Q_n$ such that there is no subgraph of $G$ isomorphic to $H$. 
A graph $H$ is said to have {\it zero Tur\'an density in a hypercube} if $\ex(Q_n, H) = o(||Q_n||)$. Otherwise, we say that $H$ has  a {\it positive Tur\'an density in a hypercube}. Note that by using a standard double counting argument,  the sequence $\ex(Q_n, H)/||Q_n||$ is non-increasing, thus the above density notions are well-defined. When clear from context, we simply  say Tur\'an density instead of Tur\'an density in a hypercube. 
The behaviour of the function $\ex(Q_n, H)$ is not well understood in general and it is not even known what graphs have positive or zero Tur\'an density.
Currently, the only known cubical graphs of positive Tur\'an density are those containing a $4$- or a $6$-cycle as a subgraph, \cite{chung, conder}, and one special graph of girth $8$ \cite{AMW}.
Conlon \cite{C} observed a connection between extremal numbers in a hypercube and classical extremal numbers for uniform hypergraphs. That permitted to determine  large specific  classes of  graphs, such as for example subdivisions,  with zero Tur\'an density,  see \cite{AMW}.  For more results on extremal numbers in a hypercube, see  \cite{baber, balogh, AKS,TW, O}.\\

	A graph  $H$  has  a {\it $k$-partite representation}  $\cH$  if  for some $n$, $H$ is isomorphic to a subgraph $H'$ of the $k$th layer $L_k$ of $Q_n$ such that   $V(H') \cap V_k$ is an edge-set of a $k$-partite $k$-uniform hypergraph $\cH$.  The graph $H'$ is called {\it representing}.  If $H$ has a $k$-partite representation for some $k$, we say that $H$ has a {\it partite representation}. Here, a $k$-uniform hypergraph is $k$-partite if the vertex set can be partitioned into $k$ parts such that each hyperedge has exactly one vertex in each part. Here, we omit the brackets and commas denoting sets, when clear from context, i.e.,  for a set $\{1,2\}$ we simply write $12$.   For example, if $H$ is an $8$-cycle, it has a $2$-partite representation  $\cH$ with hyperedges 
$12, 23, 34, 14$  corresponding to an $8$-cycle  with vertices $1, 12, 2, 23, 3, 34, 4, 14, 1$, in order. 
For a $k$-uniform hypergraph $\cH$, $\ex_k(n, \cH)$ denotes the largest number of edges in a $k$-uniform $n$-vertex hypergraph with no subgraph isomorphic to $\cH$.  \\

Using a theorem by Erd\H{o}s \cite{E64}, that states that $\ex_k(n, \cH) = o_n(n^k)$ for any $k$-partite hypergraph $\cH$, 
Conlon \cite{C} proved that if $H$ is a graph that has partite representation, then  $\ex(Q_n, H) = o(||Q_n||)$.
In the same paper Conlon \cite{C} raised a question whether  cubical graphs that  have  no partite representation have positive Tur\'an density.   Here, we show that it is not the case, i.e. having a partite representation is not a characterisation for zero Tur\'an density in a hypercube.  For that, we construct a family of cubical  graphs that have no partite representation, but have zero Tur\'an density.  These graphs are formed by two copies of so-called {\it theta}-graphs that we define below.\\

For a graph $G$, let $G(1)$ be a $1$-subdivision of $G$, i.e., a graph $G'$ with a vertex set $V(G) \cup E(G)$  and  an edge set $\{ue, ev:  e=uv\in E(G)\}$. 
We call the vertices from $V(G)$ in $G'$,  the {\it poles} of $G'$ and other vertices, the {\it subdivision vertices}.
Let $K_{s,t}$ denote a complete bipartite graph with parts of sizes $s$ and $t$, respectively.  We shall be considering the $1$-subdivision of $K_{q,2}$, $q\geq 3$. We shall call the two vertices of degree $q$ in $K_{q, 2}(1)$, the {\it main poles}.
Note that $K_{q,2}(1)$ is also referred to as a {\it theta graph} with  $q$ legs of length $4$. Here the legs are paths with endpoints being main poles. We shall use a shorter notation $\Theta(q)$ for $K_{q, 2}(1)$, when appropriate. If a graph $G$ contains a subgraph isomorphic to $H$, we call such a subgraph a {\it copy of} $H$.\\

Let $H(q)$ be a union of two copies of $\Theta(q)$ sharing exactly one vertex that is a main pole of one copy and a subdivision vertex in another copy, see Figure \ref{H-graph}.\\

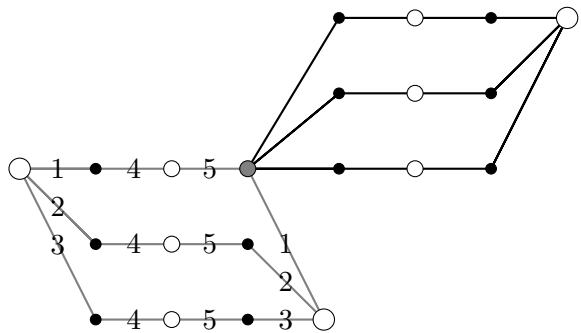
\begin{figure}[H]
\begin{center}
	\begin{tikzpicture}[scale=1.0]
	\newcommand\x{4.2}
	\newcommand\y{1}
	\newcommand\z{-1}
\draw[gray][thick] (0,1) --node[black] {2} (1,0) -- (0,1) -- node[black] {1} (1,1) -- (0,1) -- node[black] {3} (1,-1);
\draw [gray][thick](1,0) -- node[black] {4} (2,0)[gray] --node[black] {5 }(3,0)-- node[black] {2} (4,\z);
\draw [gray][thick](1,1)-- node[black] {4} (2,1)--node[black] {5} (3,1)-- node[black] {1} (4,\z);
\draw [gray][thick](1,-1)-- node[black] {4} (2,-1)-- node[black] {5} (3,-1)-- node[black] {3} (4,\z);

\draw[thick](3,1)--(\x,0+\y)--  (3,1)--(\x,1+\y)--(3,1)--(\x,2+\y);
\draw[thick](\x+3,2+\y)--(\x+2,0+\y)--(\x+3,2+\y)--(\x+2,1+\y)--(\x+3,2+\y)--(\x+2,2+\y);
\draw[thick](\x,1+\y)--(\x+1,1+\y)--(\x+2,1+\y);
\draw[thick](\x,2+\y)--(\x+1,2+\y)--(\x+2,2+\y);
\draw[thick](\x,0+\y)--(\x+1,0+\y)--(\x+2,0+\y);

\draw[fill=white] (0,1) circle (4pt);
\draw[fill=black] (1,0) circle (2pt);
\draw[fill=black] (1,1) circle (2pt);
\draw[fill=black] (1,-1) circle (2pt);

\draw[fill=white] (2,0) circle (3pt);
\draw[fill=white] (2,1) circle (3pt);
\draw[fill=white] (2,-1) circle (3pt);

\draw[fill=black] (3,0) circle (2pt);
\draw[fill=gray] (3,1) circle (3pt);
\draw[fill=black] (3,-1) circle (2pt);
\draw[fill=white] (4,\z) circle (4pt);

\draw[fill=black] (\x,0+\y) circle (2pt);
\draw[fill=black] (\x,1+\y) circle (2pt);
\draw[fill=black] (\x,2+\y) circle (2pt);

\draw[fill=white] (\x+1,0+\y) circle (3pt);
\draw[fill=white] (\x+1,1+\y) circle (3pt);
\draw[fill=white] (\x+1,2+\y) circle (3pt);

\draw[fill=black] (\x+2,0+\y) circle (2pt);
\draw[fill=black] (\x+2,1+\y) circle (2pt);
\draw[fill=black] (\x+2,2+\y) circle (2pt);

\draw[fill=white] (\x+3,2+\y) circle (4pt);

\end{tikzpicture}
	\end{center}
	\caption{A graph $H(3)$ with larger vertices being poles in  the respective copies of theta graphs as well as a nice coloring of one theta graph}
	\label{H-graph}
\end{figure}

Marquardt \cite{M} showed that $H(q)$ has no partite representation for $q=3$. Here, we give a slightly different proof of this fact for any $q\geq 3$  and show that $H(q)$ has a zero Tur\'an density. In doing so, we prove a result of an independent interest that gives  a new class of cubical graphs of zero Tur\'an density.

\begin{theorem}\label{block}
Let $H$ be a graph whose every block has a partite representation. Then $H$ has a zero Tur\'an density in a hypercube.
\end{theorem}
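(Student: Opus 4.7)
The plan is to argue by induction on the number of blocks of $H$, with the base case (single block with partite representation) being Conlon's theorem. To make the induction survive the gluings at cut vertices, I strengthen the inductive statement to a rooted supersaturation form: for every $\epsilon>0$ and every vertex $v\in V(H)$ there exist $\delta>0$ and $n_0$ such that for $n\ge n_0$, every $G\subseteq Q_n$ with $\|G\|\ge\epsilon\|Q_n\|$ contains copies of $H$ whose image of $v$ ranges over at least $\delta\cdot 2^n$ vertices of $Q_n$. The base case of this stronger statement follows from Conlon's theorem combined with the standard hypercube supersaturation (obtained via random sub-cubes of bounded dimension) and Markov's inequality, using that $Q_n$ is vertex-transitive so every vertex is a root of exactly $N_H/2^n$ of the $N_H$ copies of $H$ in $Q_n$.

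For the inductive step, pick a leaf block $B$ of $H$ meeting the remainder $H':=H-(V(B)\setminus\{v\})$ at a cut vertex $v$; the blocks of $H'$ are a subset of the blocks of $H$, so the rooted inductive hypothesis applies to $H'$. Given $G\subseteq Q_n$ of density at least $\epsilon$, split the coordinates as $[n]=T_1\cup T_2$ with $|T_1|=|T_2|=n/2$ and view vertices as pairs $(Y,Z)\in 2^{T_1}\times 2^{T_2}$. A standard averaging shows that a positive proportion of the $T_1$-fibers $G_Z\subseteq Q_{T_1}$ and of the $T_2$-fibers $G_Y\subseteq Q_{T_2}$ inherit density at least $\epsilon/2$. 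Applying the rooted inductive hypothesis for $H'$ to each dense $T_1$-fiber produces, for a positive fraction of $Z\in 2^{T_2}$, a set $R_1(Z)\subseteq 2^{T_1}$ of $v$-roots of $H'$ in $G_Z$ with $|R_1(Z)|\ge\delta_1\cdot 2^{|T_1|}$; applying Conlon's rooted theorem to $B$ in each dense $T_2$-fiber produces, for a positive fraction of $Y\in 2^{T_1}$, an analogous set $R_2(Y)\subseteq 2^{T_2}$ with $|R_2(Y)|\ge\delta_2\cdot 2^{|T_2|}$. Any pair $(Y,Z)$ with $Y\in R_1(Z)$ and $Z\in R_2(Y)$ yields a copy of $H=H'\cup_v B$ in $G$, since the two embeddings use disjoint coordinate sets and meet exactly at the vertex $(Y,Z)$.

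The principal obstacle is that the two "good pair'' sets $\{(Y,Z):Y\in R_1(Z)\}$ and $\{(Y,Z):Z\in R_2(Y)\}$ each have density $\Omega(1)$ inside $2^n$ but need not intersect a priori. I deal with this by averaging over a uniformly random partition $[n]=T_1\cup T_2$: because the definitions of $R_1,R_2$ are symmetric in the roles of $T_1$ and $T_2$, the two events for a fixed $(Y,Z)$ depend on essentially orthogonal information in $G$, so a short second-moment computation exploiting the automorphism group of $Q_n$ shows that under the random partition they are nearly independent, and the expected number of compatible pairs is $\Omega(2^n)$, hence positive for $n$ large. The same count in fact produces $\Omega(2^n)$ distinct images of $v$, which upgrades the conclusion to the rooted supersaturation statement for $H$ needed to propagate the induction, completing the proof.
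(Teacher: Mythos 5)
Your overall architecture (induction on blocks, a rooted supersaturation statement, gluing a leaf block to the rest at the cut vertex) matches the paper's strategy in spirit, and your geometric gluing via the product structure $Q_n \cong Q_{T_1}\times Q_{T_2}$ is a legitimate alternative to the paper's Lemma on unions of partite representations: a copy of $H'$ inside the fiber $2^{T_1}\times\{Z\}$ and a copy of $B$ inside $\{Y\}\times 2^{T_2}$ do indeed meet exactly at $(Y,Z)$. The base case (supersaturation via random subcubes plus vertex-transitivity to spread the roots) is also fine in outline. The problem is the step you yourself identify as the principal obstacle: you produce two subsets of $2^{[n]}$ of positive density --- the $H'$-roots $\{(Y,Z): Y\in R_1(Z)\}$ and the $B$-roots $\{(Y,Z): Z\in R_2(Y)\}$ --- and you need them to intersect. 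Your resolution, that ``a short second-moment computation exploiting the automorphism group of $Q_n$ shows that under the random partition they are nearly independent,'' is an assertion, not an argument. The inductive hypothesis only guarantees that, for each partition, \emph{some} positive-density set of roots exists; it says nothing about which vertices occur as roots, so for a fixed vertex $A$ you cannot even lower-bound the probability (over the random partition) that $A\in S_1$, let alone control the correlation between $A\in S_1$ and $A\in S_2$. Both events are determined by the same adversarially chosen $G$, and disjointness of the edge sets that witness them does not yield independence. Since two positive-density subsets of $2^{[n]}$ need not meet, this is exactly where the whole difficulty of the theorem sits, and it is left unproved.

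For contrast, the paper resolves this point constructively rather than probabilistically, by \emph{forcing} the root of the leaf-block copy to land inside the root set of the $H'$-copies. Working in a single dense edge-layer $L_j$, it shows the set $\tilde V\subseteq V_j$ of $H'$-roots is large (else the edges missed by $\tilde V$ would still be dense enough to produce a new root), restricts to high-degree roots $V$, and uses a random two-coloring to pass to $V'\subseteq V$ such that each chosen copy $H'_v$ has its other top vertices outside $V'$. Then $G[V',V_{j-1}]$ still has a positive fraction of $\lVert L_j\rVert$ edges, so one can find there a copy of the bouquet $F$ of $q>|V(H')|$ copies of the leaf block sharing only the cut vertex ($F$ has a partite representation by the paper's Lemma \ref{partite}); its root necessarily lies in $V'$ and is therefore automatically also an $H'$-root, and the multiplicity $q$ lets one discard the copies of the leaf block that collide with $H'_v$ in $V_{j-1}$. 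Some interleaving of this kind --- choosing the host structure for the second block \emph{after} and \emph{inside} the root set of the first --- appears to be necessary; if you want to keep your product-of-fibers framework, you would need an analogous mechanism (e.g., finding the $B$-copy in a fiber restricted to the already-certified $H'$-roots, together with a reason that restriction preserves density), not an appeal to near-independence.
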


\begin{theorem}\label{main}
For any $q\geq 3$ the graph $H(q)$ is cubical, has no partite representation, but has a zero Tur\'an density in a hypercube.
\end{theorem}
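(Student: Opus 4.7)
The plan is to prove three assertions about $H(q)$: it is cubical, it has zero Tur\'an density, and it has no partite representation. The first two will be short; the third is where the real work lies.

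For cubicality, I would observe that each block of $H(q)$ is a copy of the $2$-connected graph $\Theta(q)=K_{q,2}(1)$, which admits an easy $2$-partite representation in $L_2$: take main poles $\{1\},\{2\}$, non-main poles $\{3\},\ldots,\{q+2\}$, and subdivision vertices $\{i,j\}$ with $i\in\{1,2\}$, $j\in\{3,\ldots,q+2\}$. To assemble the two blocks at their shared vertex $v$, shift each embedding (by XOR with an appropriate set) so that $v$ maps to $\emptyset$, then place the two embeddings on disjoint sets of coordinates; the union is a cubical embedding of $H(q)$. For zero Tur\'an density, Theorem~\ref{block} applies directly, since both blocks of $H(q)$ are copies of $\Theta(q)$ and hence have a partite representation.

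For ``no partite representation'', I would suppose for contradiction that $H(q)\cong H'\subseteq L_k$ in some $Q_n$ with $V(H')\cap V_k$ forming the edges of a $k$-partite $k$-uniform hypergraph with parts $U_1,\ldots,U_k$, writing $T(j)$ for the unique element of a hyperedge $T$ in $U_j$. Since $H(q)$ is bipartite and the shared vertex lies in the class containing the poles of copy~1 together with the subdivision vertices of copy~2, the shared vertex $S$ ends up in either $V_k$ or $V_{k-1}$, producing two cases.

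The crux is to trace the $q$ length-$4$ legs $a\to e^a_i\to c_i\to e^b_i\to b$ of copy~1, where $a=S$, and exploit the $k$-partite structure to parameterize each leg. When $S\in V_k$, the $i$th leg determines two distinct indices $\tau_i,\phi_i\in[k]$ and values $w_i\in U_{\tau_i}\setminus\{S(\tau_i)\}$, $r_i\in U_{\phi_i}\setminus\{S(\phi_i)\}$ such that $b$ agrees with $S$ outside $U_{\tau_i}\cup U_{\phi_i}$ and equals $w_i,r_i$ in those slots; distinctness of the $e^a_i$ forces $\tau_1,\ldots,\tau_q$ distinct. The key step is that matching the fixed hyperedge $b$ across any two legs $i\neq i'$ forces $\{\tau_i,\phi_i\}=\{\tau_{i'},\phi_{i'}\}$: any partial or empty overlap would force some slot of $b$ to be simultaneously equal to $S(\cdot)$ (from an uninvolved leg) and different from $S(\cdot)$ by construction. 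Hence all $\{\tau_i,\phi_i\}$ collapse to a single $2$-element set, forcing $q\leq 2$ and contradicting $q\geq 3$. When $S\in V_{k-1}$, an analogous parameterization (with $e^a_i=S\cup\{p_i\}$ for distinct $p_i$ in the unique part $U_\alpha$ missed by $S$) yields a contradiction already for $q\geq 2$: the missing part $U_\delta$ of $b$ is either $U_\alpha$ (forcing $b\cap U_{\tau_i}$ to equal both $\{t_i\}$ and $\{S(\tau_i)\}$) or another part (forcing $b\cap U_\alpha=\{p_i\}$ to vary with $i$).

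The main obstacle is the part-by-part bookkeeping in this final step: one must carefully track, for each of the $q$ legs, which parts of the $k$-partite structure are modified and to what values, and then convert the identity ``$b$ is the same hyperedge reached by every leg'' into rigidity that forces $q\leq 2$. Once the parameterization is set up correctly, the case analysis on $|\{\tau_i,\phi_i\}\cap\{\tau_{i'},\phi_{i'}\}|$ is short.
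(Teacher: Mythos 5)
Your treatment of cubicality (gluing the two blocks at the shared vertex after translating it to $\emptyset$ and using disjoint coordinate sets) and of zero Tur\'an density (via Theorem~\ref{block}, since each block is a $\Theta(q)$ with a $2$-partite representation) is correct; the latter is exactly the paper's route. Your argument in the case $S\in V_k$ is also correct and is in fact a clean, self-contained substitute for the paper's appeal to the fact from [AMW] that top-layer main poles of $\Theta(q)$ force three hyperedges incompatible with $k$-partiteness: tracing a leg $S\to e^a_i\to c_i\to e^b_i\to b$ indeed forces $\phi_i\neq\tau_i$ (else $e^b_i=e^a_i$) and shows $b$ differs from $S$ on exactly the two parts $\{\tau_i,\phi_i\}$, so the $q$ distinct $\tau_i$ live in a fixed $2$-set and $q\le 2$.

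The case $S\in V_{k-1}$, however, contains a genuine error: no contradiction can be extracted from the copy in which $S$ is a main pole, for \emph{any} $q$, because a theta graph whose main poles are bottom vertices is exactly the standard $2$-partite representation of $\Theta(q)$ (main poles $\{1\},\{2\}$, legs $\{1\}\to\{1,p_i\}\to\{p_i\}\to\{2,p_i\}\to\{2\}$). Concretely, in your subcase $U_\delta=U_\alpha$ one gets $b=S-\{S(\tau_i)\}+\{y_i\}$ with $y_i\in U_{\tau_i}\setminus\{S(\tau_i)\}$, and this is perfectly consistent across all legs when $\tau_1=\cdots=\tau_q$ and $y_1=\cdots=y_q$ — the distinctness of the legs is carried entirely by the $p_i\in U_\alpha$, so there is no ``uninvolved leg'' forcing $b\cap U_{\tau_i}=\{S(\tau_i)\}$. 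Your claimed contradiction for $q\ge 2$ would falsely rule out the very representation used to prove that $\Theta(q)$ has zero Tur\'an density. The missing idea is that in this case you must switch to the \emph{other} copy: $S$ is a subdivision vertex of copy~2, hence adjacent in $H'$ to a main pole of copy~2, which therefore lies in $V_k$; since the two main poles of copy~2 are joined by paths of even length they lie in the same vertex layer, so copy~2 has both main poles in $V_k$ and your (correct) top-layer argument applies to it. This is precisely how the paper proceeds — the adjacency of a main pole of one theta to a main pole of the other, via the shared vertex, guarantees that one of the two copies has its main poles in the top layer. As written, your proof never uses the second theta copy, which is the entire reason $H(q)$, unlike $\Theta(q)$ itself, has no partite representation.
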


\section{Proofs of main results}

We shall need the following lemmas.  The first one gives a property of graphs with partite representations, formulated by Marquardt  \cite{M}, we include it here for completeness. The second one is  basically a result of  Conlon \cite{C}, but with a slightly different averaging argument that  also gives  a more precise statement  about an embedding of a graph in a layer in two different ways. Finally, the last one is the main lemma needed for Theorem \ref{block}. If a graph $H$ has a $k$-partite representation with representing graph $H'$,  then the vertices of $H$ corresponding to $V(H')\cap V_k$ are called {\it top vertices} with respect to this representation, all other vertices are called {\it bottom vertices}.  Note that if a graph has a $k$-partite representation, it has a $(k+1)$-partite representation that could be seen by simply adding a new element to every vertex  of a representing graph.

\begin{lemma}\label{partite}
Let  $H$ and $G$ be  connected graphs, each having a $k$-partite representation. 
Assume that $V(H)\cap V(G)=\{v\}$, where  $v$ that is a top vertex for both $H$ and $G$ or a bottom vertex for both $H$ and $G$. Then $H\cup G$   has a partite representation.
\end{lemma}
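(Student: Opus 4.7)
My strategy is to combine the two given $k$-partite representations on disjoint ground sets, padding each image from the $H$-side by the set $v_G:=\phi_G(v)$ and each image from the $G$-side by $v_H:=\phi_H(v)$, so that the two images of $v$ merge into the single set $v_H\cup v_G$.

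Fix $k$-partite representations $\phi_H,\phi_G$ of $H,G$ on disjoint ground sets $X_H=X_H^1\sqcup\cdots\sqcup X_H^k$ and $X_G=X_G^1\sqcup\cdots\sqcup X_G^k$, and define
\[
  \phi(y)=\begin{cases}\phi_H(y)\cup v_G, & y\in V(H),\\ v_H\cup\phi_G(y), & y\in V(G).\end{cases}
\]
Both rules assign $v_H\cup v_G$ to $v$, so $\phi$ is well-defined; it preserves edges because on each side both endpoints of any given edge are padded by the same set, leaving the ``differ by one element'' relation intact. Injectivity is the key check: if $\phi(y_H)=\phi(y_G)$ for $y_H\in V(H)$ and $y_G\in V(G)$, the projection onto $X_H$ gives $\phi_H(y_H)=v_H$, which forces $y_H=v$, and symmetrically $y_G=v$.

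If $v$ is top for both, then $|v_H|=|v_G|=k$, and I treat $X_H\sqcup X_G$ as a $(2k)$-partite ground set with parts $X_H^1,\dots,X_H^k,X_G^1,\dots,X_G^k$. Under this part structure $\phi(v)$ is a $(2k)$-set, tops of $H$ and $G$ map to $(2k)$-sets and bottoms map to $(2k-1)$-sets, so $\phi$ embeds $H\cup G$ into $L_{2k}$. If instead $v$ is bottom for both, then $|v_H|=|v_G|=k-1$, and after permuting the part labels on each side I may assume $v_H$ misses $X_H^k$ and $v_G$ misses $X_G^k$. I then declare a $(2k-1)$-partite structure on $X_H\sqcup X_G$ with parts $X_H^1,\dots,X_H^{k-1},X_G^1,\dots,X_G^{k-1}$ and a single merged last part $X_H^k\sqcup X_G^k$. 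Now $\phi(v)$ is a $(2k-2)$-set missing exactly this merged part, so $v$ becomes a bottom, and tops of $H$ and $G$ map to $(2k-1)$-sets with one element per part. In either case the images of the top vertices form a $k'$-partite $k'$-uniform hypergraph (with $k'=2k$ or $2k-1$): each image has one element per declared part, and the union of the two $k'$-partite hypergraphs coming from $H$ and $G$ is $k'$-partite with respect to the common part structure. The subtle point the construction is designed to avoid is accidental coincidence among bottom neighbours of $v$ on the two sides, which would arise if one naively took a single ground set with $v_H$ and $v_G$ identified; keeping $X_H$ and $X_G$ disjoint rules these out uniformly in both cases.
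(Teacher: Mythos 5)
Your proposal is correct. The top-vertex case is exactly the paper's construction: on disjoint ground sets, pad every vertex of the representing copy of $H$ by the image of $v$ on the $G$-side and vice versa, landing in layer $2k$ with the $2k$ parts $U_1,\dots,U_k,W_1,\dots,W_k$. Where you genuinely diverge is the bottom-vertex case. The paper does not pad there; it instead overlaps the two ground sets so that $X\cap Y$ is precisely the $(k-1)$-set representing $v$ on both sides and takes the plain union $V(H')\cup V(G')$, which stays in layer $k$ and remains $k$-partite with parts $U_i\cup W_i$ (this silently requires aligning the two labelings so that each element of the shared $(k-1)$-set lies in $U_i$ and $W_i$ for the same index $i$, making the parts $U_i\cup W_i$ disjoint --- a check your route never needs). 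You instead keep the ground sets disjoint, pad uniformly as in the top case, and land in layer $2k-1$ with $2k-1$ parts, merging $X_H^k$ and $X_G^k$ (the parts missed by $v_H$ and $v_G$) into a single part that every padded top image meets exactly once; your verification that this merged structure is transversal, and your projection argument for injectivity, are both sound. The trade-off: your argument is more uniform across the two cases and sidesteps the part-alignment subtlety, at the harmless cost of inflating the uniformity from $k$ to $2k-1$, which is irrelevant downstream since only the existence of some partite representation is used (and any $k$-partite representation lifts to a $(k+1)$-partite one anyway).
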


\begin{proof}
Let $H'$  and $G'$ be copies of $H$ and $G$  that are subgraphs of the $k$th layer in  hypercubes with a ground set $X$ and $Y$, respectively,  such that  $V(H') \cap \binom{X}{k}$ and   $V(H') \cap \binom{Y}{k}$ are the edge sets  of  $k$-partite hypergraphs  with parts $U_1,\ldots,U_k$ and parts $W_1,\ldots, W_k$, respectively.
Let $u$ and $w$  be respective copies of $v$ in $H'$ and $G'$. \\

Assume first that $v$ is a top vertex of $H$ and  of $G$. Assume that $X\cap Y=\emptyset$.
Let $F''$ be an induced subgraph of a hypercube $Q$ with ground set $X\cup Y$ and vertex set $\{x\cup w:  x\in V(H')\}\cup \{u\cup y: y\in V(G')\}$.
We see that $F''$ contains a copy $F'$ of $F$ and is contained in the  layer $2k$ of $Q$ with  the vertex $u\cup w$ playing a role of $v$.
Moreover $V(F')\cap \binom{X\cup Y}{2k}$ is an edge set of a  $2k$-partite hypergraph with parts $U_1,\ldots,U_k, W_1,\ldots, W_k$.\\

Assume now that $v$ is a bottom vertex of $H$ and of $G$. Assume further  that $X\cap Y=[k-1]$ and $u=w=[k-1]$.
Let $F''$ be an induced subgraph of a hypercube $Q$ with ground set $X\cup Y$ and vertex set $V(H') \cup V(G')$.
We see that $F''$ contains a copy $F'$ of $F$ and is contained in layer $k$ of $Q$ with vertex $v'=[k-1]$ playing a role of $v$.
Moreover $V(F')\cap \binom{X\cup Y}{k}$ is an edge set of a  $k$-partite hypergraph with parts $U_1\cup W_1,\ldots,U_k\cup W_k$.
\end{proof}

 \begin{lemma} \label{layer}
Let $Z$ be a  connected bipartite graph with a partite representation. Then for any $\gamma>0$ there is $n_1=n_1(\gamma,Z)$ such that for any $n>n_1$ the following holds. 
 Let $L_j$ be the $j$th layer of $Q_n$, where  $n/2 -n^{2/3} \leq j\leq  n/2 + n^{2/3}$. Let $G\subseteq L_j$ be a graph such that $||G||\geq \gamma ||L_j||$.  Fix one of the two  partite set of $Z$ arbitrarily and call its vertices odd. Then there is a copy of $Z$ in $G$ with odd vertices in $V_j$ and there is a copy of $Z$ in $G$ with odd vertices in $V_{j-1}$.
 \end{lemma}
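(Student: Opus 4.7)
My plan is to adapt Conlon's link-to-hypergraph reduction and apply Erd\H{o}s's classical theorem on Tur\'an numbers of partite hypergraphs. Fix some $k$-partite representation $Z' \subseteq L_k(Q_m)$ of $Z$, so that the top vertices of $Z'$ form a $k$-partite $k$-uniform hypergraph $\mathcal{Z}$, and let $C$ denote the partite class of $Z$ corresponding to those top vertices.

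The core step is to produce a copy of $Z$ in $G$ with the class $C$ placed in $V_j$. For each $B \in V_j$ let $D_G(B) \subseteq B$ be the set of $v \in B$ with $(B \setminus \{v\}, B) \in G$, so that $\sum_B |D_G(B)| = ||G|| \geq \gamma\, j \binom{n}{j}$. Choose $T \in \binom{[n]}{j-k}$ uniformly at random and define an auxiliary $k$-uniform hypergraph $\mathcal{G}_T$ on $[n] \setminus T$ whose hyperedges are exactly the $k$-sets $S$ satisfying $S \subseteq D_G(S \cup T)$. A double-count over pairs $(B, S)$ with $S \subseteq B \in V_j$ and $|S| = k$, combined with Jensen's inequality applied to $\binom{\cdot}{k}$, yields
\[
\mathbb{E}\bigl[|\mathcal{G}_T|\bigr] \;\geq\; \frac{\binom{n}{j}}{\binom{n}{j-k}}\,\binom{\gamma j}{k}.
\]
The hypothesis $n/2 - n^{2/3} \leq j \leq n/2 + n^{2/3}$ is precisely what makes the relevant binomial ratios equal $1 + o(1)$ uniformly in the window, and so some fixed $T$ produces $|\mathcal{G}_T| \geq (\gamma^k/2) \binom{n-j+k}{k}$, i.e.\ a $k$-uniform hypergraph of positive density on approximately $n/2$ vertices.

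By Erd\H{o}s's theorem on Tur\'an numbers of partite hypergraphs, once $n$ exceeds some threshold $n_1 = n_1(\gamma, Z)$, this $\mathcal{G}_T$ contains a copy of $\mathcal{Z}$, realized by an injection $\phi : [m] \to [n] \setminus T$ with $\phi(e) \in \mathcal{G}_T$ for every hyperedge $e$ of $\mathcal{Z}$. Setting $B_e := \phi(e) \cup T$ and $A_f := \phi(f) \cup T$ for each top vertex $e$ and bottom vertex $f$ of $Z'$, the condition $\phi(e) \subseteq D_G(B_e)$ exactly guarantees that every edge of $Z'$ becomes an edge of $G$, so $G$ contains a copy of $Z$ with $C$ in $V_j$. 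The other placement is obtained via the antipodal automorphism $S \mapsto [n] \setminus S$ of $Q_n$, which maps $G \subseteq L_j$ to a subgraph $\bar G \subseteq L_{n-j+1}$ of the same density and with $n - j + 1$ still in the same window. Applying the core step to $\bar G$ and pulling back yields a copy of $Z$ in $G$ with $C$ in $V_{j-1}$. Since the reader may designate either partite class of $Z$ as odd, one of these two copies realizes odd vertices in $V_j$ and the other realizes odd vertices in $V_{j-1}$.

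The main obstacle is the uniform asymptotic behavior of the binomial ratios across the entire $j$-window: one needs both $\binom{n}{j}/\binom{n}{j-k}$ and $\binom{\gamma j}{k}/\binom{n-j+k}{k}$ to concentrate around their expected values (this is precisely what forces the $n^{2/3}$ fluctuation bound in the hypothesis), so that after Jensen's inequality one retains a leading factor of order $\gamma^k$, which is what Erd\H{o}s's partite hypergraph theorem requires in order to deliver a copy of $\mathcal{Z}$.
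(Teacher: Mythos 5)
Your proposal is correct and follows essentially the same route as the paper's proof: your auxiliary hypergraph $\mathcal{G}_T$ (hyperedges $S$ with $S\subseteq D_G(S\cup T)$) is exactly the paper's set of ``full vertices'' of $U_x$ for $x=T$, the star-count plus Jensen averaging over $T\in\binom{[n]}{j-k}$ is the same double count, and the reduction to Erd\H{o}s's theorem on partite hypergraphs and the antipodal/complementation symmetry for the second placement both match the paper. The only cosmetic difference is that you obtain both placements of the top class and then observe that this covers either choice of odd class, which is a slightly cleaner handling of that point than the paper's.
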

 
 \begin{proof}
Let $Z$ have a $k$-partite representation in a hypercube $Q_n$  with ground set $[n]$, with  odd vertices corresponding to $k$-element sets.  We need the following notation. For any $x\in V(Q_n)$, let $\Up(x)$ be the {\it up set} of $x$, i.e., 
$\Up(x) = \{ y \subseteq [n]: ~ x\subseteq y\}$. Note that $\Up(x)$  induces a graph isomorphic to $Q_m$, for $m= n-|x|$. \\

To prove the first part of  the lemma,  for each $x\in V_{j-k}$,  let $U_x= U_{x,k}$ be  the intersection of $G$ and the $k$th edge-layer of the hypercube $Q$  induced by $\Up(x)$.  Note that $U_x$ is a subgraph of $L_j$, the $j$th layer of $Q_n$.  We call a vertex $y$ of $U_x$ a {\it full vertex} if it is in the  $k$th vertex layer of $Q$ and has degree $k$ in $U_x$, the largest possible degree. Let $u_x$ be the number of full vertices in $U_x$.
We shall argue that there is a vertex $x \in V_{j-k}$ such that $u_x$ is large.\\

Let $t$ be the number of $k$-edge stars  in $G$ with the center in $V_{j}$. Then
$$t  = \sum_{y\in V_{j}} \binom{d_G(y)}{k} \geq |V_{j}| \binom{\gamma ||L_j||/|V_{j}|}{k}.$$ 

Each such a star corresponds to a full vertex  in $U_x$, for some $x$.
Thus $\sum_{x\in V_{j-k}} u_x \geq t$ and there is a vertex $x$ such that
 $$u_x \geq  \frac{|V_{j}|}{|V_{j-k}|} \binom{\gamma' ||L_j||/|V_{j}|}{k}.$$
 Since $n/2 -n^{2/3} \leq j\leq  n/2 + n^{2/3}$ and $k$ is a fixed constant, we have that $||L_j||/||V_j|| = \frac{n}{2}(1+o(1))$ and  $|V_j|/|V_{j-k}| \geq  c'(k)$, so  
  $$u_x \geq c(k)n^k,$$ for positive constant $c(k)$ and $c'(k)$.
Consider the full vertices in $U_x$. They correspond to $u_x$ hyperedges in a $k$-uniform hypergraph with the vertex set $[n]-x$ of size  $n-j+k = \frac{n}{2} (1+o(1))$. By a theorem of Erd\H{o}s \cite{E64}, such a hypergraph contains any fixed $k$-partite $k$-uniform hypergraph, and thus in particular the one representing $Z$.  Therefore $G$ contains a copy of $Z$ with odd vertices in $V_j$.\\

To prove the second part of the Lemma, we  can repeat the above argument by considering the vertices $x$ in $V_{j+k-1}$ and their downsets. Alternatively, we see that $L_j$ corresponds to $L_{j'}$, a ``symmetric" layer, where $j'=n+1-j$, $V_{j'}$ corresponds to $V_{j-1}$, and $V_{j'-1}$ corresponds to $V_j$. Thus, finding a copy of $Z$ in $L_{j'}$ with odd vertices in $V_{j'}$ corresponds to finding a copy of $Z$ in $L_j$ with odd vertices  in $V_{j-1}$. Since $j'$ satisfies the same conditions as $j$, i.e., $n/2 -n^{2/3} \leq j' \leq n/2 + n^{2/3}$, we thus could use the first part of the Claim to obtain the second one. 
\end{proof}

 \begin{lemma} \label{final}
Let $H$ be a connected  bipartite graph with $\ell$ blocks, where  every block has  a partite representation.  Then for any  $\gamma >0$ there is $n_0=n_0(\gamma, H)$ such that for any $n>n_0$ the following holds.  Let $L_j$ be the $j$th layer of $Q_n$, where  $n/2 -n^{2/3} \leq j\leq  n/2 + n^{2/3}$. Let $G\subseteq L_j$ be a graph with $||G||= \gamma ||L_j||$.  Fix one of two  partite sets of $H$ arbitrarily and call its vertices odd. Then there is a copy of $H$ in $G$ with odd vertices in $V_j$ and there is a copy of $H$ in $G$ with odd vertices in $V_{j-1}$.
 \end{lemma}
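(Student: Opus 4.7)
The plan is to induct on the number $\ell$ of blocks of $H$, strengthening the conclusion to a supersaturated form that asserts at least $\delta(\gamma,H)\,n^{|V(H)|-1}$ copies of $H$ with odd vertices on the prescribed side of $L_j$. The base case $\ell=1$ is handled by revisiting the proof of Lemma \ref{layer}: the Erd\H{o}s bound on extremal numbers for $k$-partite $k$-uniform hypergraphs comes with a standard supersaturation, which upgrades the single copy produced there into $\Omega(n^{|V(H)|-1})$ copies, and the symmetric second half of Lemma \ref{layer} gives the same count with odd vertices in $V_{j-1}$.

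For the inductive step, let $B$ be a leaf in the block tree of $H$, joined to $H' := H - (V(B)\setminus\{v\})$ at the cut vertex $v$; note that $H'$ is connected, bipartite, and has $\ell-1$ blocks, each with a partite representation. Applying the strengthened induction hypothesis to $H'$ gives at least $\delta_1\, n^{|V(H')|-1}$ copies of $H'$ in $G$ that place $v$ in the layer of $L_j$ determined by its bipartite class in $H$ (invoking the two-sided form of the strengthened statement to match the parity of $v$ in $H$). By averaging, a positive fraction of the vertices $y$ in that layer occur as the image of $v$ in at least $\delta_2\, n^{|V(H')|-2}$ such copies of $H'$.

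For such a $y$, I extend to a copy of $B$ rooted at $y$. Using Lemma \ref{layer}'s freedom to place either side of a bipartite partite-representable graph on top, fix a $k$-partite representation of $B$ in which $v$ corresponds to a $k$-subset $v'$, and pick any $v'\subseteq y$; setting $F := y\setminus v'$ produces a sub-hypercube $\Up(F)$, isomorphic to a cube of dimension $n-|F|=\Theta(n)$, in which $y$ plays the role of $v'$. A double count over pairs $(y,F)$ inside $G$ shows that a positive fraction of $y$'s admit a shift $F$ for which $G \cap \Up(F)$ has density at least $\gamma/2$ in its layer $L_k$ inside $\Up(F)$. Applying Lemma \ref{layer} there to $B$, with the odd side chosen to match the layer of $y$, yields a copy of $B$ with $v\mapsto y$; the number of copies of $B$ rooted at $y$ that can collide with a given copy of $H'$ outside $\{v\}$ is at most $O(n^{|V(B)|-2})$, which is absorbed by the supersaturation, leaving a valid assembly into a copy of $H$ with odd vertices in the prescribed layer.

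The main obstacle is to guarantee that the set of ``$v$-popular'' vertices $y$ (those appearing as $v$'s image in many copies of $H'$) and the set of ``locally dense'' vertices $y$ (those admitting a shift $F$ with $G\cap\Up(F)$ of density at least $\gamma/2$) share a common member, and moreover that enough $H'$-copies rooted at such a $y$ survive the collision with the embedded $B$. Both conditions hold on positive fractions of vertices by averaging, but extracting a single $y$ meeting both, and then absorbing the collision count, is exactly what forces the quantitative, supersaturated form of the induction hypothesis and motivates upgrading Lemma \ref{layer} accordingly in the base case.
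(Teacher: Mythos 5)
Your overall strategy---induct on the blocks with a supersaturated count, root many copies of $H'$ at a popular image of the cut vertex, then attach one copy of the leaf block and absorb collisions---is a reasonable plan, but the quantitative form you chose does not close. The total count $\delta_1 n^{|V(H')|-1}$ from your strengthened hypothesis is far too small to support the averaging claim that ``a positive fraction of the vertices $y$ in that layer occur as the image of $v$'': the layer $V_j$ has $\binom{n}{j}=2^{(1+o(1))n}$ vertices, so a total of $\delta_1 n^{|V(H')|-1}$ copies can touch only $O(n^{|V(H')|-1})$ distinct roots, a vanishing fraction. The natural normalization is $\delta\,|V_j|\,n^{|V(H)|-1}$ (one factor $|V_j|$ for the root, one factor $n$ per additional vertex along a spanning tree); with your normalization the later absorption step also fails, since the number of $H'$-copies rooted at $y$ that meet a fixed embedded copy of $B$ is $O(n^{|V(H')|-2})$, the same order as your per-root guarantee $\delta_2 n^{|V(H')|-2}$ rather than lower order. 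A second, more local problem: you invoke Lemma \ref{layer} inside the subcube $\Up(F)$ at its $k$th layer, but $\Up(F)$ has dimension $\Theta(n)$ and $k$ is a constant, so the hypothesis $m/2-m^{2/3}\leq k\leq m/2+m^{2/3}$ of that lemma is violated; you would have to rerun the Erd\H{o}s-type star-counting argument from the proof of Lemma \ref{layer} directly rather than cite the lemma.

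For comparison, the paper's proof avoids supersaturation entirely and needs only the single-copy Lemma \ref{layer}. It replaces the leaf block $H''$ by a ``sunflower'' $F$ of $q>|V(H')|$ copies of $H''$ glued at the cut vertex; Lemma \ref{partite} shows $F$ still has a partite representation, so Lemma \ref{layer} finds one copy of $F$ inside a suitably pruned dense part $G[V'\cup V_{j-1}]$, where $V'$ is produced by a random two-colouring so that the chosen $H'$-copy rooted at the same vertex has all its other $V_j$-vertices outside $V'$. Disjointness in $V_j$ is then automatic, and disjointness in $V_{j-1}$ follows by pigeonhole among the $q$ petals. If you want to salvage your route, you must both upgrade the count to the $|V_j|\,n^{|V(H)|-1}$ scale (which does follow from Erd\H{o}s--Simonovits supersaturation applied over all shifts $x\in V_{j-k}$ in the proof of Lemma \ref{layer}) and replace the illegal application of Lemma \ref{layer} in $\Up(F)$ by a direct hypergraph argument; as written, the proof has a genuine gap.
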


\begin{proof}
We shall prove the statement by induction on $\ell$ with the base case $\ell=1$ directly following from Lemma \ref{layer}.\\

If $H$ has $\ell$ blocks, $\ell\geq 2$,  such that each has a $k$-partite representation, let $H=H'\cup H''$, where  $H'$ and $H''$ share a single vertex $v'$,  and $H''$ is a block of $H$, i.e., a leaf block.   Let $F$ be a graph that is a union of $q>|V(H')|$ copies  $F_1, \ldots, F_q$ of $H''$ that pairwise share only a vertex corresponding to $v'$. 
Let  $\gamma >0$ and $n_0$ be sufficiently large (we shall specify how large later). Let $G\subseteq L_j$, $||G||=\gamma ||L_j||$. \\

Assume first that $v'$ is an odd vertex.\\

The idea of the proof is as follows. We shall first find many copies of $H'$ in $G$ such that a  vertex corresponding to $v'$ is in $V_j$.  Then we shall find a copy of $F^*$ of $F$ and  a copy $H^*$ of  $H'$ such that  $V(F^*)\cap V(H^*) \cap V_j = \{v\}$, where $v$ plays a role of $v'$. Finally, since $q$ is large enough, we shall claim that there is a copy $F_i^*$ of $F_i$ for some $i$ such that 
$V(F_i^*)\cap V(H^*) \cap V_{j-1} = \emptyset$. This will imply that $F^*_i \cup H^*$ is a copy of $H$. Next we shall give  the details of this argument:\\

First, we shall construct sets $\tilde{V}, V,$ and $V'$, such that  $V'\subseteq V\subseteq \tilde{V}\subseteq V_j$ as follows: \\

Consider all copies  $H'_v$ of $H'$ in $G$ with  a vertex  playing a role of $v'$ being in $V_j$.  Let $\tilde{V}\subseteq V_j$ be the set of such vertices $v$. 
Note that $||G[V_j-\tilde{V}, V_{j-1}]|| <\frac{1}{4} \gamma||L_j||$, otherwise by induction we can find another copy $H_w'$ of $H'$ with a vertex $w\in V_j-\tilde{V}$ playing a role of $v'$, contradicting the definition of $\tilde{V}$. Here, we assume that $n_0>n_0(\gamma/4, H')$.
Note, that for each $v\in V$ there could be several copies of $H'$ with $v$ playing a role of $v'$. We choose one such copy $H'_v$ arbitrarily.
Let $V\subseteq \tilde{V}$ be a set of vertices whose degree in $G$ is at least half the average degree of $G$, i.e. for each $y\in V$,
 $$d_G(y)\geq \frac{\gamma}{2}\frac{||L_j||}{|V_j|}.$$
The total number of edges of $G$ not incident to $V$ is at most $\frac{1}{4}\gamma ||L_j||+ \frac{1}{2}||G||= \frac{3}{4}\gamma ||L_j||$. 
Thus $||G[V, V_{j-1}]|| \geq \frac{\gamma}{4}||L_j||$.  
Since  all vertices from $V_j$ in $L_j$ have the same degree, 
$$|V| \geq \frac{\gamma}{4}|V_j|.$$

Let for each $v\in V$, $A_v\subseteq V_j$ and $B_v\subseteq V_{j-1}$ be sets of vertices such that $V(H'_v)=A_v\cup B_v\cup \{v\}$, $v\not\in A_v$.
Randomly color each vertex in $V$ with red or blue independently with equal probability, color the vertices in $V_j-V$ blue. We say that a vertex  $v\in V$ is {\it good} if $v$ is red and each vertex in $A_v$ is blue. Then the expected number of good $v$'s is at least $|V|2^{-t}$, where $|A_v|=t-1$.
Thus there is a set  $V'\subseteq V$, corresponding to a set of good $v$'s, with $|V'|\geq |V|2^{-t}$, such that for each $v\in V'$,  $A_v \cap V' = \emptyset$.\\

We see that 
$$||G[ V', V_{j-1}]||\geq  \frac{\gamma}{2}\frac{||L_j||}{|V_j|}|V'| \geq \frac{\gamma}{2}\frac{||L_j||}{|V_j|} \frac{|V|}{2^{t}} \geq 
 \gamma^2 2^{-t-3} ||L_j||.$$

Consider the graph $F$ defined above. By Lemma \ref{partite} $F$ has a partite representation.   Then  by Lemma \ref{layer} there is a copy $F^*$ of $F$  in $G[V'\cup V_{j-1}]$ with a vertex $v\in V$ corresponding to $v'$.  Here, we assume that $n_0>n_1(\gamma^2 2^{-t-3}, F)$.  Let $F_i^*$ be a respective copy of $F_i$, for each $i\in [q]$. By construction of $V'$, there is a copy  $H^*$ of $H'$ in $G$ with all vertices except for $v$ not in $V'$.
Let $B_v$ be the set of vertices of $H^*$ in $V_{j-1}$.  We see that $V(F^*) \cap V(H^*) \cap V_j = \{v\}$. 
Since $q> |V(H')|>|B_v|$, there is $i \in [q]$, such that $V(F^*_i)\cap B_v\cap V_{j-1} = \emptyset$. 
Then $H^*\cup F_i^*$ is a copy of $H$ in $G$.\\

The case when $v'$ is not an odd vertex is treated similarly by first finding many copies of $H'$ with a vertex corresponding to $v'$ in $V_{j-1}$.
\end{proof}

\begin{proof}[Proof of Theorem \ref{block}]
 Let  $G'$ be a subgraph of $Q_n$ such that  $||G'|| = 2\gamma||Q_n||$, for some constant $\gamma>0$ and sufficiently large $n$.
By a standard argument, see for example Lemma 1 \cite{AMM},
$$\sum_{i: |i-n/2|>n^{2/3}} \binom{n}{i} 
= o(2^n).$$ Since the degree of each vertex in $Q_n$ is $n$,  the total number of edges in $G'$,  incident to vertices in $V_i$'s, for  $i<n/2 - n^{2/3}$ or $ i > n/2 +n^{2/3}$ is $o(n2^n) = o(||Q_n||) = o(||G||)$.
Then there is  $j\in \{n/2 - n^{2/3}, n/2 + n^{2/3}\}$ such that $L_j$  contains  at least  $\gamma ||L_i||$ edges of $G'$. 
Lemma \ref{final} applied to $G= G'\cap L_j$ concludes the proof.
\end{proof}

\begin{proof}[Proof of Theorem \ref{main}]
Let $H=H(q)$ be a union of two copies $\Theta_1$ and $\Theta_2$ of $\Theta(q)$ sharing exactly one vertex that is a main pole of $\Theta_2$ and a subdivision vertex of $\Theta_1$.\\

First we need to check that $H(q)$ is cubical.  Marquardt \cite{M} gave an explicit embedding of $H(3)$ in a hypecube. One can also use a  characterisation by Havel and Moravek \cite{HM}, who proved that a graph is cubical if and only if it has a nice edge-coloring.  Here,  an edge-coloring  is {\it nice} if any cycle uses each color an even number of times and each non-trivial path uses some color an odd number of times.  
To construct a nice coloring of $H$,  one can use nice colorings of $\Theta_1$ and $\Theta_2$ using disjoint sets of colors. These colorings were given in \cite{HM} and we show one in Figure \ref{H-graph}.\\

Next, we shall show that $H(q)$ has no partite representation.  A Hamming distance between two binary vectors $a$ and $b$, denoted $d_H(a,b)$ is the number of positions where the vectors differ.
In \cite{AMW} it is shown that  if   $q\geq 3$ and $a, a'$ are the main poles of  a copy of $\Theta(q)$ embedded in a layer of a hypercube, then  $d_H(a, a')=2$. 
Let the main poles of  $\Theta_1$ and $\Theta_2$ be denoted $v_i', v_i''$, respectively, $i=1,2$. Assume that $H$ has a $k$-partite representation for some $k$, i.e.,  $V(H)\subseteq \binom{[n]}{k}\cup \binom{[n]}{k-1}$, for some $n$ and $V(H)\cap  \binom{[n]}{k}$ corresponds to the edges of a $k$-partite hypergraph,  denote it by $\cH$. We have that $d_H(v_i', v_i'')=2$,  $i=1,2$. Since a main  pole of $\Theta_1$ and a main pole of $\Theta_2$ are adjacent,  the main poles of $\Theta_1$ are in one vertex layer and the main poles of $\Theta_2$ are in another vertex layer,  without loss of generality, $v_1', v_1'' \in \binom{[n]}{k}$ and $v_2', v_2''\in \binom{[n]}{k-1}$. 
Assume further that $v_1'$ and $v_1''$ equal to $10$ and $01$ in the first two positions of their binary representation and coincide on all other positions. Consider neighbours of $v_1$ in $\Theta_1$. At least one of these neighbours, say $w$ differs from $v_1'$ in a position that is not one of the first two  ones, say in the third position. 
Then, restricted to the first three position $v_1'$ is $101$,  $w$ is $100$, and  $v_1''$ is $011$. Thus, the remaining two vertices on the $v_1, v_1'$ path, that contains $w$, must be equal to $110$ and $010$ in these three position.  All vertices of this leg are the same in positions $4, \ldots, n$, say they are equal to $1$ on a set of positions $A\subseteq \{4, \ldots, n\}$, $|A|=k-2$.  Thus, we have that $\cH'$ contains  hyperedges $\{1, 2\}\cup A$,  $\{2,3\} \cup A$, and $\{1, 3\} \cup A$. If $\cH$ were to be $k$-partite, $1, 2, 3$ and each element of $A$ would belong to distinct parts, i.e., to $k+1$ parts, a contradiction. This shows that $H(q)$ has no partite representation. \\~\\

Finally, we shall show that $H(q)$ has a zero Tur\'an density in a hypercube. Using Theorem \ref{block} it is sufficient to show that $\Theta(q)$  has partite representation. Note that $\Theta(q) = K_{2,q}(1)$, a subdivision of $K_{2,q}$. 
We claim that  $\cH = K_{2,q}$ gives a $2$-partite representation of $\Theta(q)$. 
Indeed, let $n= q+2$ and let $\cH$ be a hypergraph on a vertex set $[n]$ with the edge set $\{ij: i\in [2], j\in [n]\setminus [2]\}$. So, $\cH$ is a bipartite graph, i.e., $2$-partite $2$-uniform hypergraph.
Let  the edges of $\cH$ correspond to subdivision vertices of $K_{2,q}(1)$. 
Let vertices $1$ and $2$ correspond to the main poles of $\Theta(q)$ and vertices $2, \ldots, q$ correspond to other vertices. 
This concludes the proof of  Theorem \ref{main}.
\end{proof}

{\bf Conclusions~~} We showed that there are cubical graphs that have no partite representation and have zero Tur\'an density in a hypercube.  
On the other hand we proved that any graph whose every block has a partite representation has a zero Tur\'an density in a hypercube.
This leads to a followup question:\\

{\bf Open question~~} Is it true that in  each cubical graph that has a zero Tur\'an density in a hypercube  each block has a partite representation?\\

{\bf Acknowledgements~~} The research was supported in part by a DFG grant FKZ AX 93/2-1.  \\

\end{document}